\documentclass{amsart}
\usepackage{amssymb,amsmath,amsthm,graphics,amscd,amsfonts}
\usepackage{color}

\pagestyle{plain}
\theoremstyle{plain}
\newtheorem{theorem}{Theorem}[section]
\newtheorem{proposition}[theorem]{Proposition}

\newtheorem{example}[theorem]{Example}

\newcommand{\bfC}{{\mathbb C}}

\newcommand{\bfR}{{\mathbb R}}

\newcommand{\barj}{{\overline j}}
\newcommand{\bark}{{\overline k}}
\newcommand{\barl}{{\overline \ell}}

\newcommand{\baru}{{\overline u}}

\newcommand{\barv}{{\overline v}}

\newcommand{\barpartial}{{\overline \partial}}

\newcommand{\mapright}[1]{\smash{\mathop{   \hbox to 0.7cm{\rightarrowfill}}
  \limits^{#1}}}

\newcommand{\Ric}{\operatorname{Ric}}

\begin{document}

\title{The weighted Laplacians on real and complex metric measure spaces}
\author{Akito Futaki}
\address{Graduate School of Mathematical Sciences, The University of Tokyo, 3-8-1 Komaba, Meguro-ku, Tokyo 153-8914, Japan}
\email{afutaki@ms.u-tokyo.ac.jp}
\subjclass[2000]{Primary 53C55, Secondary 53C21, 55N91 }
\date{April 7, 2014 }
\keywords{weighted Laplacian, first eigenvalue, metric measure space}
\dedicatory{Dedicated to the memory of Professor Shoshichi Kobayashi}
\begin{abstract} 
In this short note we
compare the weighted Laplacians on real and complex (K\"ahler) metric measure spaces.
In the compact case K\"ahler metric measure spaces are considered on Fano 
manifolds for the study of K\"ahler-Einstein metrics while real metric measure spaces are considered
with Bakry-\'Emery Ricci tensor. There are twisted Laplacians which are useful in both cases
but look alike each other. We see that
if we consider {\it noncompact} complete manifolds significant differences appear.
\end{abstract}

\maketitle

\section{Introduction}
The weighted Laplacians can be considered on smooth metric measure spaces 
using the exterior derivative $d$ on Riemannian manifolds and the $\barpartial$-operator
on complex K\"ahler manifolds. We considered in \cite{futaki87} and \cite{futaki88} the weighted $\barpartial$-Laplacian 
\begin{equation*}
\Delta_F u = \Delta_\barpartial\, u + \nabla_i F\ \nabla^i u = g^{i\barj}\nabla_i\nabla_\barj\, u + \nabla_i F\ \nabla^i u 
\end{equation*}
on a Fano manifold $M$ where $\Delta_F$ acts on {\it complex-valued} smooth functions 
$u \in C_\bfC^\infty(M)$. 
Here, a Fano manifold, by definition, has  positive first Chern class, and we have chosen a K\"ahler form 
\begin{equation*}
\omega = \sqrt{-1}\ g_{i\barj} dz^i \wedge d\overline{z^j}
\end{equation*}
in $2\pi c_1(M)$.  The real-valued smooth function $F$ is chosen so that
\begin{equation*}
\operatorname{Ric}(\omega) - \omega = \sqrt{-1}\ \partial\barpartial F,
\end{equation*}
and this is possible since the Ricci form 
\begin{equation*}
\operatorname{Ric}(\omega) = - \sqrt{-1}\ \partial\barpartial \log\det(g_{i\barj}).
\end{equation*}
also represents $2\pi c_1(M)$.
We say $\lambda$ is an eigenvalue of $\Delta_F$ if $\Delta_F u + \lambda u = 0$ for some nonzero complex-valued function $u$.
 If we use the weighted volume $d\mu = e^F \omega^m$ we have
\begin{eqnarray*}
\int_M g( \barpartial u, \barpartial v) d\mu=-\int_M (\Delta_F u)\barv \ d\mu=-\int_M u(\overline{\Delta_F v})d\mu
\end{eqnarray*} 
where 
$$
g( \barpartial u, \barpartial v) = \nabla_\barj u\nabla^\barj\barv = g^{i\barj} \frac{\partial u}{\partial \overline{z^j}}\frac{\partial \barv}{\partial z^i}.
$$
Thus all the eigenvalues are nonnegative real numbers. 
It is shown that the first non-zero eigenvalue $\lambda_1(\Delta_F)$ satisfies
\begin{equation}\label{5}
\lambda_1(\Delta_F) \ge 1
\end{equation}
and the equality holds if and only if the Lie algebra $\mathfrak h(M)$ of all holomorphic vector fields 
is non-zero. In fact, given a holomorphic vector field $X$, there corresponds an eigenfunction $u$ by
$$
X = \mathrm{grad}^\prime u := g^{i\barj} \frac{\partial u}{\partial \overline{z^j}}  \frac{\partial }{\partial z^i} .
$$
This observation was made before the publication of \cite{futaki83.1}, and
was inspired by Shoshichi Kobayashi's book \cite{kobayashitrnsf}, Chapter III, section 7, ``Conformal changes of the Laplacian''.
 In fact the obstruction 
to the existence of K\"ahler-Einstein metrics
introduced
in \cite{futaki83.1} was found using the above observation combined with an idea suggested by J.L.Kazdan
 \cite{KJ} (see also \cite{KJ-FW1}). 
 See section 2 for more about the application of the above observation to the study of K\"ahler-Einstein
 metrics. 
 The proof of (\ref{5}) follows from the general formula
\begin{eqnarray}\label{1-2}
- \int_M \nabla^i (\Delta_F u)\ \nabla_i\baru\ d\mu = \int_M (|\nabla''\nabla''u|^2 + |\barpartial u|^2 )d\mu
\end{eqnarray}
where
$$ |\nabla''\nabla''u|^2 = \nabla^i\nabla^j u\ \nabla_i\nabla_j \baru = g^{i\bark}g^{j\barl}\ \nabla_\bark \nabla_\barl  u \ \nabla_i\nabla_j \baru.$$
See also \cite{Pali08}, \cite{WZ13}, \cite{LiLong13} for other applications to the study in K\"ahler geometry. 
 
On the other hand a similar idea is commonly used in Riemannian geometry and probability theory with the 
 Bakry-\'Emery Ricci curvature $\Ric+\nabla^2 f$ on the weighted Riemannian manifolds $(M, g, f)$. 
This means that $(M, g)$ is a complete Riemannian manifold with the weighted measure $d\mu=e^{-f}dv$, where 
$dv$ denotes the Riemannian volume measure on $(M, g)$ and $f$ is a real-valued $C^2$-function.
We denote by $C^\infty(M)$ (resp. $C^\infty_0(M)$) the set of real-valued smooth functions 
(reps. with compact support).
For all $u, v\in C^\infty_0(M)$, the following integration by parts formula holds
\begin{eqnarray*}
\int_M g( \nabla u, \nabla v) d\mu=-\int_M (\Delta_f u)vd\mu=-\int_M u(\Delta_f v)d\mu,
\end{eqnarray*}
where $\Delta_f$ is the called the weighted Laplacian with respect to the volume measure $\mu$. More precisely, we have
\begin{eqnarray*}
\Delta_f =\Delta -\nabla f\cdot\nabla.
\end{eqnarray*}
Here we denote by $\Delta$ the $d$-Laplacian: $\Delta = - d^\ast d = g^{ij}\nabla_i\nabla_j$ with respect to real coordinates $(x^1, \cdots, x^n)$. 
In \cite{BE}, Bakry and \'Emery proved that for all $u\in C_0^\infty(M)$,
\begin{eqnarray}
\Delta_f |\nabla u|^2-2\langle \nabla u, \nabla \Delta_f u\rangle=2|\nabla^2 u|^2+2(\Ric+\nabla^2 f)(\nabla u, \nabla u). \label{BWF}
\end{eqnarray}
The formula $(\ref{BWF})$ can be viewed as a natural extension of the Bochner-Weitzenb\"ock formula. The equation (\ref{1-2}) can 
also be derived from a similar Weitzeb\"ock type formula. The probabilistic study of the weighted Laplacian was motivated by
the hypercontractivity of Markov semigroups. In \cite{Gross75} Gross showed that the hypercontractivity holds if and only if the logarithmic
Sobolev inequality holds. Then in \cite{BE} Bakry and \'Emery showed that on a smooth metric measure space the logarithmic Sobolev
inequality holds if the Bakry-\'Emery Ricci tensor is bounded from below by a positive constant, that is if there is a positive
constant $C$ such that 
\begin{equation}\label{1-4}
\Ric + \nabla^2 f \ge C g.
\end{equation}
Note in this case we have
\begin{equation}\label{1-5}
\lambda_1(\Delta_f) \ge C.
\end{equation}
See section 3 more about the Bakry-\'Emery Ricci tensor.

It has been a puzzle (to the author) how the real and complex (Fano) cases are different. One immediate difference is that,
while $\Delta_f$ in the Riemannian case is a real operator, 
$\Delta_F$ in the K\"ahler case is not a real operator unless $F$ is constant. This means  that $\Delta_F u$ is a complex valued function
even if $u$ is real valued. Therefore the eigenfunctions corresponding to nonzero eigenvalues can not be real valued. 

In this paper we see, by comparing with the results of Cheng and Zhou \cite{ChengZhou13} in the real
noncompact case, 
that if we consider noncompact complete manifolds then more significant differences appear between the  real and complex weighted Laplacians. For example, when the first nonzero eigenvalue of the twisted Laplacian
attains the expected lower bound, it is a discrete spectrum in the real case, but it can be an essential spectrum in the complex case. Moreover if the the first nonzero eigenvalue of expected lower bound has multiplicity $k$
then in the real case the Gaussian soliton of dimension $k$ splits off, but this is not the case in the
complex case. See sections 4 and 5 for more detail.

\section{The case of Fano manifolds }

Let $M$ have positive first Chern class, i.e. the first Chern class $c_1(M)$ contains a positive closed $(1,1)$-form.
This is equivalent to say the anticanonical bundle of $M$ is ample, and such a manifold is called a Fano manifold.
We choose a K\"ahler from 
\begin{equation}\label{omega}
\omega = \sqrt{-1}\ g_{i\barj}\ dz^i \wedge d\overline{z^j}
\end{equation}
in $2\pi c_1(M)$.
Since the Ricci form 
\begin{equation}\label{Ric}
\operatorname{Ric}(\omega) = - \sqrt{-1} \partial\barpartial \log\det(g_{i\barj}).
\end{equation}
also represents $2\pi c_1(M)$ there exists a 
smooth function $F$ such that
\begin{equation}\label{Fun}
\operatorname{Ric}(\omega) - \omega = \sqrt{-1}\ \partial\barpartial F.
\end{equation}
Denote by $C^\infty_\bfC (M)$ the set of all complex valued functions on $M$. 
For $u$ and $v$ in $C^\infty_\bfC (M)$, we consider the $L^2$-inner product
with respect to the weighted volume $d\mu = e^F \omega^m$
$$ (u,v)_F = \int_M u\barv\,e^F\,\omega^m$$
and
$$ (\barpartial u,\barpartial v)_F = \int_M g(\barpartial u, \barpartial v)\,e^F\,\omega^m.$$
We considered in \cite{futaki87} and \cite{futaki88} the weighted $\barpartial$-Laplacian 
\begin{equation}\label{Delta_F}
\Delta_F u = \Delta_\barpartial\, u + \nabla_i F\ \nabla^i u = g^{i\barj}\nabla_i\nabla_\barj\, u + \nabla_i F\ \nabla^i u.
\end{equation}
We then have
\begin{eqnarray}\label{self}
(\barpartial u,\barpartial v)_F = - (\Delta_F u, v)_F = - (u, \Delta_F v)_F.
\end{eqnarray} 
We say $\lambda$ is an eigenvalue of $\Delta_F$ if $\Delta_F u + \lambda u = 0$ for some nonzero complex-valued function $u$.
Since $\Delta_F$ is self-adjoint by (\ref{self}) all the eigenvalues are nonnegative real numbers. 

\begin{theorem}[c.f. \cite{futaki87}, \cite{futaki88}]\label{F}
The first non-zero eigenvalue $\lambda_1(\Delta_F)$ of $\Delta_F$ satisfies
\begin{equation}\label{1-1}
\lambda_1(\Delta_F) \ge 1
\end{equation}
and the equality holds if and only if the Lie algebra $\mathfrak h(M)$ of all holomorphic vector fields 
is non-zero. In fact, for a holomorphic vector field $X$, there corresponds an eigenfunction $u$ by
$$
X = \mathrm{grad}^\prime u := g^{i\barj} \frac{\partial u}{\partial \overline{z^j}}  \frac{\partial }{\partial z^i}.
$$
\end{theorem}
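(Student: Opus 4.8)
My plan is to deduce everything from the Bochner--Weitzenb\"ock type identity \eqref{1-2}, so the first step is to prove that identity. I would start from
\[
\nabla^i(\Delta_F u) = g^{i\barj}\nabla_\barj\Bigl(g^{k\barl}\nabla_k\nabla_\barl u + \nabla_k F\,\nabla^k u\Bigr),
\]
pull the outer derivative inside (using $\nabla g = 0$) and commute $\nabla_\barj$ past $\nabla_k$; the commutator throws off a Ricci term. At this point the normalization \eqref{Fun}, i.e.\ $R_{i\barj} - g_{i\barj} = \nabla_i\nabla_\barj F$, is exactly what is needed: its $g_{i\barj}$-part contributes the term $\nabla^i u$, while its $\nabla_i\nabla_\barj F$-part cancels the contribution of $\nabla^i(\nabla_k F\,\nabla^k u)$. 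Contracting with $\nabla_i\baru$, integrating against $d\mu = e^F\omega^m$, and integrating the remaining pure second-order term by parts --- justified because $\Delta_F$ is precisely the Laplacian adapted to $d\mu$, cf.\ \eqref{self} --- I should land on \eqref{1-2}. I expect the only delicate part of this step to be bookkeeping of curvature signs and traces; everything else is formal.

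Granting \eqref{1-2}, the inequality is short. If $\Delta_F u + \lambda u = 0$ with $u\neq 0$ and $\lambda\neq 0$, then $\nabla^i(\Delta_F u) = -\lambda\nabla^i u$, so the left side of \eqref{1-2} equals $\lambda\int_M|\barpartial u|^2\,d\mu$ and \eqref{1-2} reads
\[
(\lambda - 1)\int_M|\barpartial u|^2\,d\mu \;=\; \int_M|\nabla''\nabla''u|^2\,d\mu \;\ge\; 0 .
\]
Since $\int_M|\barpartial u|^2\,d\mu = -(\Delta_F u,u)_F = \lambda\,(u,u)_F > 0$, this forces $\lambda\ge 1$, hence $\lambda_1(\Delta_F)\ge 1$; and equality $\lambda = 1$ holds exactly when $\nabla''\nabla''u\equiv 0$, that is $\nabla_\bark\nabla_\barl u = 0$ for all $k,\ell$.

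For the equality case and the link with $\mathfrak h(M)$, I would first note the elementary equivalence: $\nabla_\bark\nabla_\barl u = 0$ for all $k,\ell$ iff $\nabla_\bark X^i = 0$ for $X^i := g^{i\barj}\nabla_\barj u$, i.e.\ iff $X = \mathrm{grad}^\prime u$ is holomorphic. So if $\lambda_1(\Delta_F) = 1$, an eigenfunction $u$ for the eigenvalue $1$ is non-constant and gives $0\neq\mathrm{grad}^\prime u\in\mathfrak h(M)$. Conversely, given $0\neq X\in\mathfrak h(M)$, I would form the $(0,1)$-form $\theta = g_{i\barj}X^i\,d\overline{z^j}$; holomorphy of $X$ makes $\barpartial\theta = 0$, and since $M$ is Fano we have $H^1(M,\mathcal O_M) = 0$ by Kodaira vanishing, so $\theta = \barpartial u$ for some $u\in C^\infty_\bfC(M)$. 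Then $\mathrm{grad}^\prime u = X$ and $\nabla_\bark\nabla_\barl u = \nabla_\bark\theta_\barl = 0$. Running the commutation computation behind \eqref{1-2} pointwise, but now exploiting $\nabla''\nabla''u\equiv 0$, should give $\nabla_\bark(\Delta_F u + u) = 0$ for every $k$; on compact $M$ this means $\Delta_F u + u$ equals a constant $c$, so $u - c$ is an eigenfunction with eigenvalue $1$ having $\mathrm{grad}^\prime(u-c) = X\neq 0$. Combined with $\lambda_1\ge 1$, this yields $\lambda_1(\Delta_F) = 1$ and produces the eigenfunction attached to $X$.

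The hard part, I expect, will be this converse direction: extracting the potential $u$ from $X$ (which really does use $H^{0,1}(M) = 0$, and hence the Fano hypothesis) and then verifying that, after subtracting a constant, $u$ is a genuine eigenfunction for $1$. That verification and the derivation of \eqref{1-2} both boil down to the same commutation-of-covariant-derivatives computation in which \eqref{Fun} trades Ricci curvature for $g + \partial\barpartial F$; getting the signs and the traces right there is the real content, the rest being routine integration by parts against the weighted measure $e^F\omega^m$.
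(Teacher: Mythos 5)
Your proposal is correct and follows essentially the same route as the paper: the lower bound $\lambda_1(\Delta_F)\ge 1$ is extracted from the weighted Bochner identity \eqref{1-2} together with the self-adjointness relation \eqref{self}, and the equality case is handled through the correspondence $X=\mathrm{grad}^\prime u$ with $\nabla''\nabla''u=0$, using $H^1(M,\mathcal O_M)=0$ on a Fano manifold to produce the potential $u$ and the same commutation argument to show $\Delta_F u+u$ is constant. This is precisely the argument the paper sketches (and attributes to \cite{futaki87}, \cite{futaki88}), so no further comment is needed.
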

If we pick another K\"ahler form $\widetilde \omega = \omega + \sqrt{-1} \partial \barpartial \varphi$ with $\varphi \in C^\infty(M)$,
then $\tilde u := u + u^i\varphi_i$ is the first eigenfunction corresponding to the holomorphic vector field $X$ in the previous theorem.
Using this I proved the following theorem in the first version of \cite{futaki83.1}.
\begin{theorem}[\cite{futaki83.1}]\label{F2}
On a Fano manifold if we define $f : \mathfrak h(M) \to \bfC$ by
$$ f(X) = \int_M XF\ \omega^m $$
then $f$ is indecent of the choice of $\omega \in 2\pi c_1(M)$. In particular if $f \ne 0$ then $M$ does not admit a K\"ahler-Einstein metric.
\end{theorem}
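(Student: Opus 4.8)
The plan is to prove invariance by differentiating $f_{\omega_t}(X)$ along a path of K\"ahler forms in $2\pi c_1(M)$ and showing the derivative vanishes. Given two K\"ahler forms $\omega_0,\omega_1\in 2\pi c_1(M)$, the $\partial\barpartial$-lemma supplies $\varphi\in C^\infty(M)$ with $\omega_1=\omega_0+\sqrt{-1}\,\partial\barpartial\varphi$, and then $\omega_t:=(1-t)\omega_0+t\omega_1=\omega_0+t\sqrt{-1}\,\partial\barpartial\varphi$ is a K\"ahler form in $2\pi c_1(M)$ for every $t\in[0,1]$; so it suffices to show $\tfrac{d}{dt}f_{\omega_t}(X)=0$. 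Write $g_{i\barj}(t)$ for the metric of $\omega_t$ and $\Delta_t=g^{i\barj}(t)\nabla_i\nabla_\barj$ for its $\barpartial$-Laplacian on functions. Since $\dot g_{i\barj}=\partial_i\partial_\barj\varphi$, one has $\tfrac{d}{dt}\log\det(g_{i\barj}(t))=g^{i\barj}(t)\dot g_{i\barj}=\Delta_t\varphi$, hence $\tfrac{d}{dt}\omega_t^m=(\Delta_t\varphi)\,\omega_t^m$, and differentiating (\ref{Ric}) gives $\tfrac{d}{dt}\operatorname{Ric}(\omega_t)=-\sqrt{-1}\,\partial\barpartial(\Delta_t\varphi)$. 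Differentiating (\ref{Fun}) in $t$ then gives $\sqrt{-1}\,\partial\barpartial(\dot F_t+\Delta_t\varphi+\varphi)=0$, so
\begin{equation*}
\dot F_t=-\Delta_t\varphi-\varphi+c(t)
\end{equation*}
for some function $c(t)$ of $t$ alone.

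Because the local holomorphic components $X^i$ of $X$ are independent of the metric, differentiating under the integral sign gives
\begin{align*}
\frac{d}{dt}f_{\omega_t}(X)
&=\int_M (X\dot F_t)\,\omega_t^m+\int_M (XF_t)(\Delta_t\varphi)\,\omega_t^m\\
&=-\int_M X(\Delta_t\varphi)\,\omega_t^m-\int_M X(\varphi)\,\omega_t^m+\int_M (XF_t)(\Delta_t\varphi)\,\omega_t^m,
\end{align*}
where $X\,c(t)=0$ was used. I would then show this vanishes using the holomorphy of $X$ together with Theorem \ref{F}. By Theorem \ref{F}, $X=\mathrm{grad}^\prime u_t$ for the holomorphy potential $u_t$ of $X$ relative to $\omega_t$, which is the eigenfunction $\Delta_{F_t}u_t=-u_t$; since $\nabla_iF_t\nabla^iu_t=XF_t$, formula (\ref{Delta_F}) then reads $XF_t=-u_t-\Delta_t u_t$. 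Moreover, because $X$ is holomorphic, a short computation with the K\"ahler Christoffel symbols ($\Gamma^j_{jk}=\partial_k\log\det g_t$, $\nabla_jX^j=\Delta_t u_t$) shows that its divergence with respect to $\omega_t^m$ is $\operatorname{div}_{\omega_t^m}(X)=\Delta_t u_t$, whence $\int_M X(h)\,\omega_t^m=-\int_M h\,(\Delta_t u_t)\,\omega_t^m$ for every $h\in C^\infty(M)$. Substituting these two identities into the three integrals above and using that $\Delta_t$ is self-adjoint for the symmetric pairing $\int_M ab\,\omega_t^m$, all the Laplacian terms cancel in pairs and one is left with $\int_M\varphi(\Delta_t u_t)\,\omega_t^m-\int_M u_t(\Delta_t\varphi)\,\omega_t^m=0$, so $\tfrac{d}{dt}f_{\omega_t}(X)=0$ and $f$ is independent of $\omega$.

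I expect the main obstacle to be precisely this last step: establishing $\operatorname{div}_{\omega_t^m}(X)=\Delta_t u_t$ for the holomorphy potential, and then organizing the three integrations by parts and the self-adjointness of $\Delta_t$ so that the $\Delta_t\varphi$ and $\Delta_t u_t$ contributions annihilate one another — the underlying point being that holomorphy of $X$, i.e. $\nabla_\barj X^i=0$, is exactly what permits all the boundary-free manipulations. (Equivalently, one can first check that $f_{\omega_t}(X)=-\int_M u_t\,\omega_t^m$ and that $\dot u_t=X\varphi$, which reduces the problem to the same cancellation; cf.\ the transformation rule $\widetilde u=u+u^i\varphi_i$ noted above.) Finally, for the stated consequence: if $M$ admitted a K\"ahler-Einstein metric $\omega_{\mathrm{KE}}$, then $[\omega_{\mathrm{KE}}]=2\pi c_1(M)$ and $\operatorname{Ric}(\omega_{\mathrm{KE}})=\omega_{\mathrm{KE}}$, so by (\ref{Fun}) the associated function $F$ would be constant; hence $XF\equiv 0$ and $f(X)=f_{\omega_{\mathrm{KE}}}(X)=0$ for every $X\in\mathfrak h(M)$, so $f\not\equiv 0$ obstructs the existence of a K\"ahler-Einstein metric.
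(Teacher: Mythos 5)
Your proof is correct and relies on essentially the same mechanism the paper indicates: the correspondence of Theorem \ref{F} giving a potential $u_t$ with $X=\mathrm{grad}^\prime u_t$ and $\Delta_{F_t}u_t+u_t=0$, hence $XF_t=-u_t-\Delta_t u_t$, combined with the divergence identity $\nabla_iX^i=g^{i\barj}\nabla_i\nabla_\barj u_t=\Delta_t u_t$ and integration by parts against $\omega_t^m$, the only implicit point being the (easily justified, e.g.\ via $F_t=F_0-\log(\omega_t^m/\omega_0^m)-t\varphi+\mathrm{const}$) smooth dependence of $F_t$ on $t$. Your parenthetical reduction, $f_{\omega_t}(X)=-\int_M u_t\,\omega_t^m$ with $\dot u_t=X\varphi$, is exactly the route the paper sketches through (\ref{moment}) and the transformation rule $\tilde u=u+u^i\varphi_i$; the direct derivative computation you carry out is a sound equivalent reorganization of that argument.
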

If we use the above correspondence in Theorem \ref{F} 
we have a complex valued smooth function $u$ such that $X = \mathrm{grad}^\prime u$ and that
$\Delta_F u + u = 0$. It follows that $XF = -\Delta_\barpartial\,u - u$ and that
\begin{equation}\label{moment}
f(X) = - \int_M u\ \omega^m.
\end{equation}
This implies the following result which is known as Mabuchi's theorem \cite{Mabuchi87-1} for toric Fano manifolds. But this is generally true even if 
$M$ is not toric.
\begin{theorem}
 The character $f$ vanishes if and only if, for the action of the maximal torus of the reductive part of the automorphism group,  
 the barycenter of the moment map lies at $0$. 
 \end{theorem}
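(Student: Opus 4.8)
The plan is to reduce to a maximal torus and then, on that torus, convert the evaluation of $f$ into a moment‑map integral by means of (\ref{moment}). \emph{Reduction.} Write $\mathrm{Aut}_0(M) = G_r \ltimes R_u$ for the Levi decomposition ($G_r$ reductive, $R_u$ unipotent), let $T \subset G_r$ be a maximal torus with maximal compact subgroup $T^k$, and set $\mathfrak{t} = \mathrm{Lie}(T^k)$ and $\mathfrak{t}^{\mathbb{C}} = \mathrm{Lie}(T) \subset \mathfrak{h}(M)$, so that $\mathfrak{h}(M) = \mathfrak{g}_r \ltimes \mathfrak{g}_u$. Since $f$ is a $\mathbb{C}$-linear Lie‑algebra character (\cite{futaki83.1}), it vanishes on $[\mathfrak{h}(M),\mathfrak{h}(M)] \supseteq [\mathfrak{g}_r,\mathfrak{g}_r]$; moreover it is known that $f$ vanishes on the nilpotent radical $\mathfrak{g}_u$ as well (this rests on the algebraic structure of $\mathrm{Aut}_0(M)$, not on the eigenvalue estimate; cf.\ \cite{futaki83.1}). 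Since $\mathfrak{h}(M) = \mathfrak{t}^{\mathbb{C}} + [\mathfrak{g}_r,\mathfrak{g}_r] + \mathfrak{g}_u$ (the center of $\mathfrak{g}_r$ lying in $\mathfrak{t}^{\mathbb{C}}$), it follows that $f$ vanishes on $\mathfrak{h}(M)$ if and only if it vanishes on $\mathfrak{t}^{\mathbb{C}}$.

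\emph{Evaluation on the torus.} By Theorem \ref{F2} we may choose $\omega \in 2\pi c_1(M)$ to be $T^k$-invariant. As $M$ is Fano, $H^1(M,\mathbb{R}) = 0$, so the $T^k$-action on $(M,\omega)$ is Hamiltonian, with moment map $\mu_\omega : M \to \mathfrak{t}^\ast$ whose additive constant will be fixed below. For $\xi \in \mathfrak{t}$ let $X_\xi$ be the generated Killing field and $Z_\xi := X_\xi^{1,0} \in \mathfrak{h}(M)$ its $(1,0)$-part; the $Z_\xi$, $\xi\in\mathfrak{t}$, span $\mathfrak{t}^{\mathbb{C}}$ over $\mathbb{C}$. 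From $\iota_{X_\xi}\omega = d\langle\mu_\omega,\xi\rangle$ (a standard sign convention), a local‑coordinate computation gives
\[
Z_\xi = \mathrm{grad}^\prime\bigl(-\sqrt{-1}\,\langle\mu_\omega,\xi\rangle\bigr).
\]
By Theorem \ref{F}, since $Z_\xi$ is holomorphic there is a unique eigenfunction $u_\xi$ with $Z_\xi = \mathrm{grad}^\prime u_\xi$ and $\Delta_F u_\xi + u_\xi = 0$, so $u_\xi$ and $-\sqrt{-1}\,\langle\mu_\omega,\xi\rangle$ differ by a constant. Integrating $\Delta_F u_\xi + u_\xi = 0$ against $e^F\omega^m$ and using (\ref{self}) with $v=1$ shows $\int_M u_\xi\,e^F\omega^m = 0$; hence, once the additive constant of $\mu_\omega$ is fixed by the normalization $\int_M\langle\mu_\omega,\xi\rangle\,e^F\omega^m = 0$ for all $\xi$ — the standard one, induced by the canonical lift of the $T^k$-action to the anticanonical bundle $K_M^{-1}$ — we have $u_\xi = -\sqrt{-1}\,\langle\mu_\omega,\xi\rangle$. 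Substituting into (\ref{moment}) gives, for all $\xi\in\mathfrak{t}$,
\[
f(Z_\xi) = -\int_M u_\xi\,\omega^m = \sqrt{-1}\int_M \langle\mu_\omega,\xi\rangle\,\omega^m = \sqrt{-1}\,\Bigl\langle \int_M \mu_\omega\,\omega^m,\ \xi\Bigr\rangle .
\]

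\emph{Conclusion.} Since $f$ is $\mathbb{C}$-linear and the $Z_\xi$ span $\mathfrak{t}^{\mathbb{C}}$, $f$ vanishes on $\mathfrak{t}^{\mathbb{C}}$ if and only if $f(Z_\xi)=0$ for all $\xi\in\mathfrak{t}$, hence — by the last display — if and only if $\int_M\mu_\omega\,\omega^m = 0 \in \mathfrak{t}^\ast$; that is, if and only if the barycenter of $\mu_\omega$ (equivalently, of its push‑forward, the Duistermaat--Heckman measure on the moment polytope) is the origin. Together with the reduction this is the assertion; in the toric case the push‑forward of $\omega^m/m!$ is Lebesgue measure on the polytope, so one recovers Mabuchi's statement \cite{Mabuchi87-1}.

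\emph{Main difficulty.} The computation above is short; the care goes into (i) pinning down the additive constant of $\mu_\omega$ intrinsically — through $K_M^{-1}$, equivalently through the weighted volume $e^F\omega^m$ — so that ``the barycenter lies at $0$'' is a well‑posed statement agreeing with the normalization implicit in (\ref{moment}); and (ii) the vanishing of $f$ on the nilpotent radical of $\mathfrak{h}(M)$, which is what reduces the problem to the torus and which uses the structure theory of $\mathrm{Aut}_0(M)$, not Theorem \ref{F}. The local identity relating $Z_\xi$ to $\langle\mu_\omega,\xi\rangle$, and the verification that $-\sqrt{-1}\,\langle\mu_\omega,\xi\rangle$ is an honest $\Delta_F$-eigenfunction (not merely a holomorphy potential of $Z_\xi$), are routine.
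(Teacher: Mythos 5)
Your proof is correct and takes essentially the same route as the paper: reduce to the maximal torus via the Chevalley/Levi decomposition, quoting the vanishing of $f$ on the unipotent radical (which the paper attributes to Mabuchi \cite{Mabuchi90-2}), and then apply (\ref{moment}) to identify the restriction of $f$ to the torus with the barycenter of the moment map. The only difference is one of detail: you spell out the identification $u_\xi = -\sqrt{-1}\,\langle\mu_\omega,\xi\rangle$ and the normalization of the moment map (weighted mean zero, equivalently the canonical lift to $K_M^{-1}$), which the paper leaves implicit in the phrase ``the theorem follows from (\ref{moment})''.
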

 \begin{proof} We have the Chevalley decomposition 
 $\mathfrak h(M) = \mathfrak h_r \oplus \mathfrak h_u$ 
 where $ \mathfrak h_r$ is the maximal reductive sub algebra and $ \mathfrak h_u$ is the unipotent radical. But by
 \cite{Mabuchi90-2} $f$ vanishes on $\mathfrak h_u$. It follows that $f$ vanishes if and only if $f$ vanishes on $\mathfrak h_r$.
 But any element of the automorphism in the reductive group is contained in a maximal torus. Therefore $f$ vanishes if and only
 if it vanishes on the maximal abelian subalgebra. Thus the theorem follows from (\ref{moment}). 
 \end{proof}
 Note also in passing that the theorem of Matsushima \cite{matsushima57}, saying that 
 the Lie algebra $\mathfrak h(M)$ is reductive when $M$ is a K\"ahler-Einstein manifold, follows from the 
 Theorem \ref{F} since if $(M,g)$ is K\"ahler-Einsten then we can take $F = 0$ and $\Delta_F = \Delta_\barpartial$ is a
 real operator so that $\mathfrak h(M)$ is a complexification of the purely imaginary eigenfunctions, the gradient vectors of which
 are Killing vector fields. 

\section{Hypercontractivity, logarithmic Sobolev inequality and Bakry-\'Emery-Ricci tensor}
In this section we review the historical background of Bakry-\'Emery Ricci tensor in probability theory.
We refer the reader to the lecture notes \cite{Guionnet-Zegarlinski} for more details.
A Markov semigroup $P_t = e^{t\mathcal L}$, $t>0$, is said to have hypercontractivity if
$$ ||P_t u||_{L^p(\mu)} \le ||u||_{L^q(\mu)}$$
where $p = p(t) = 1 + (q-1)e^{\frac{2t}{c}} > 1$, $c \in [1,\infty)$ is a constant and $\mu$ is a 
probability measure. 

In \cite{Gross75}, L.Gross showed that $P_t = e^{t\mathcal L}$ satisfies hypercontractivity if 
and only if the following logarithmic Sobolev inequality is satisfied:
$$
\mu(u^2 \log u^2) \le c \mu(u(-\mathcal Lu)) + \mu(u^2)\log \mu(u^2)
$$
where $c \in (0,\infty)$ is a constant independent of $u$.

Let $(M,g, e^{-f}dV_g)$ be a smooth metric measure space. This means that $(M,g)$ is a
Riemannian manifold with the Riemannian volume element $dV_g$, that $f$ is a smooth
function on $M$, and that we consider the twisted volume element $e^{-f}dV_g$.
In \cite{BE}, Bakry and \'Emery showed that if a metric measure space $(M,g, e^{-f}dV_g)$
has finite measure $\int_M e^{-f}dV_g < \infty$, and $\mathrm{Ric}_f := \mathrm{Ric} + \nabla^2 f 
\ge \lambda g$ with a positive constant $\lambda > 0$, then the logarithmic Sobolev inequality
holds with respect to the measure $d\mu = e^{-f}dV_g$
and $\mathcal L u = \Delta_f u = \Delta u - \nabla^i f\nabla_i u$. In particular, 
$P_t = e^{t\mathcal L}$ satisfies hypercontractivity.
Note that Morgan \cite{Morgan05} later proved that if $\mathrm{Ric}_f 
\ge \lambda g$ for some positive constant $\lambda > 0$ then the weighted measure is finite:
 $\int_M e^{-f}dV_g < \infty$. 

The key observation of Bakry and \'Emery is the following Weitzenb\"ock type formula.
\begin{equation}\label{Weitzenbock}
\Delta_f |\nabla u|^2 - 2(\nabla u, \nabla \Delta_f u) = 2|\nabla^2 u|^2 + 2(\mathrm{Ric} + \nabla^2 f)
(\nabla u, \nabla u).
\end{equation}
As an immediate consequence of (\ref{Weitzenbock}) we obtain the following spectral gap.
\begin{proposition}[\cite{BE}, \cite{HeinNaber13}, \cite{Morgan05}; see also \cite{ChengZhou13}]\label{eigen}
In the situation as above, if a non constant function $u \in L^2(e^{-f}dV_g)$ satisfies 
$\Delta_f u + \mu u = 0$ then $\mu \ge \lambda$. That is, the first nonzero eigenvalue 
$\lambda_1(\Delta_f)$ satisfies $\lambda_1(\Delta_f) \ge \lambda$.

\end{proposition}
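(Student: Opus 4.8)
The plan is to integrate the Bakry--\'Emery Weitzenb\"ock identity (\ref{Weitzenbock}) against the weighted measure $d\mu = e^{-f}dV_g$ and then substitute the eigenvalue equation $\Delta_f u = -\mu u$. Since $\Delta_f$ is self-adjoint with respect to $d\mu$, one has $\int_M \Delta_f\varphi\,d\mu = 0$ whenever $\varphi$ is sufficiently well-behaved at infinity; applied to $\varphi = |\nabla u|^2$ this kills the first term on the left-hand side of (\ref{Weitzenbock}). For the second term, substituting $\Delta_f u = -\mu u$ gives $-2\int_M(\nabla u,\nabla\Delta_f u)\,d\mu = 2\mu\int_M|\nabla u|^2\,d\mu$ with no further integration by parts needed. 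On the right-hand side, the hypothesis $\Ric + \nabla^2 f \ge \lambda g$ gives $\int_M(\Ric+\nabla^2 f)(\nabla u,\nabla u)\,d\mu \ge \lambda\int_M|\nabla u|^2\,d\mu$. Combining these,
$$ \mu\int_M|\nabla u|^2\,d\mu \;\ge\; \int_M|\nabla^2 u|^2\,d\mu \;+\; \lambda\int_M|\nabla u|^2\,d\mu, $$
that is, $(\mu-\lambda)\int_M|\nabla u|^2\,d\mu \ge \int_M|\nabla^2 u|^2\,d\mu \ge 0$.

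To conclude, observe that $M$ is connected and $u$ is non-constant, so $\int_M|\nabla u|^2\,d\mu > 0$; dividing then gives $\mu \ge \lambda$, i.e. $\lambda_1(\Delta_f) \ge \lambda$. Note in passing that equality $\mu = \lambda$ would force $\nabla^2 u \equiv 0$ together with $(\Ric+\nabla^2 f)(\nabla u,\nabla u) = \lambda|\nabla u|^2$ almost everywhere, which is precisely the rigidity underlying the Gaussian-soliton splitting discussed in the later sections.

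The real work, and the only genuine obstacle, is to justify the two integrations by parts on the noncompact complete manifold $M$; on a closed manifold the computation above is immediate. The standard device is to fix cutoff functions $\phi_R \in C_0^\infty(M)$ with $\phi_R \equiv 1$ on $B_R(p)$, $\mathrm{supp}\,\phi_R \subset B_{2R}(p)$ and $|\nabla\phi_R| \le C/R$, to multiply (\ref{Weitzenbock}) by $\phi_R^2$, integrate, and let $R \to \infty$. The resulting error terms are linear in $\nabla\phi_R$ and are handled by the Cauchy--Schwarz inequality, provided one knows $u,\ \nabla u,\ \nabla^2 u \in L^2(d\mu)$. Here $u \in L^2(d\mu)$ is the hypothesis; $\nabla u \in L^2(d\mu)$ follows from a Caccioppoli-type estimate obtained by testing the eigenvalue equation against $\phi_R^2 u$ (and in the limit one gets $\int_M|\nabla u|^2\,d\mu = \mu\int_M u^2\,d\mu$); the delicate point is $\nabla^2 u \in L^2(d\mu)$, which one gets either from interior elliptic estimates for $\Delta_f$ together with the previous bounds, or by running the cutoff argument on (\ref{Weitzenbock}) itself so as to absorb the term $\int_M\phi_R^2|\nabla^2 u|^2\,d\mu$ on the left. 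These are exactly the analytic ingredients isolated in \cite{ChengZhou13} and \cite{HeinNaber13}; once they are in place, passing to the limit $R\to\infty$ recovers the clean identity above and finishes the proof.
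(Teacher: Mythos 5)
Your proposal is correct and takes essentially the same route as the paper, which presents the spectral gap as an immediate consequence of the Bakry--\'Emery Weitzenb\"ock formula (\ref{Weitzenbock}) integrated against $d\mu=e^{-f}dV_g$, with the noncompact integration-by-parts issues delegated to the cited references. Your cutoff-function and Caccioppoli discussion simply fills in those analytic details (and your second route, absorbing $\int\phi_R^2|\nabla^2u|^2\,d\mu$ directly, does work), so nothing further is needed.
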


\section{Spectrum of complete smooth metric measure spaces}

Given a smooth metric measure space $(M,g, e^{-f}dV_g)$ we set $L^2_f(M)$ to be the closure of 
the set $C_c^\infty(M)$ of real-valued smooth functions on $M$ with compact support
with respect to $L^2(d\mu)$-norm, and $H^1_f(M)$ to be the closure of 
the set $C_c^\infty(M)$ of real-valued smooth functions $u$ on $M$ with compact support
with respect to the norm 
$$||u||_{H^1_f} = \left( \int_M (u^2 + |\nabla u|^2) d\mu \right)^\frac12.$$

By the result of Bakry and \'Emery, if $\mathrm{Ric}_f 
\ge \lambda g$ for some positive constant $\lambda > 0$ then we have the logarithmic Sobolev inequality
$$ \int_M u^2 \log u^2 d\mu \le C \int_M |\nabla u|^2 d\mu$$
for $u \in H^1_f(M)$ with $\int_M u^2 d\mu = 1$.

It is known (c.f. \cite{HeinNaber13}, \cite{ChengZhou13}) that if the logarithmic Sobolev
inequality holds then we have the compact embedding $H^1_f(M) \hookrightarrow
L^2_f(M)$, and the spectrum of the twisted Laplacian $\Delta_f$ is discrete.
We denote by $\lambda_1(\Delta_f)$ the first nonzero eigenvalue of $\Delta_f$ with eigenfunction in
$H^1_f(M)$. 

Now we recall a recent result of Cheng and Zhou \cite{ChengZhou13}. 
First let us see two examples.
\begin{example}
Let $(\bfR^n, g_{\mathrm{can}}, \frac{|x|^2}4)$ be the Gaussian soliton, that is $g_{\mathrm{can}}$ 
is the flat metric $\frac 12 \nabla^2 |x|^2$ with $f = \frac{|x|^2}4$ so that
 $\mathrm{Ric}_f = 0 + \frac 12 \nabla x \cdot \nabla x$. Then we have $\lambda_1(\Delta_f) = \frac 12$.
This is because Proposition \ref{eigen} shows $\lambda_1(\Delta_f) \ge \frac12$. But the equality
is attained by $u = x^1$ since $\Delta u + \nabla u\cdot \nabla f = dx^1\cdot (\frac 12 \sum x^idx^i) = \frac{x^1}2 = \frac u2$.
\end{example}
\begin{example} Consider $S^{n-k}(\sqrt{2(n-k-1)}) \times \bfR^k$ for $n-k \ge 2$ and $k\ge 1$.
Take $f = \frac {|t|^2}4$ with $t \in \bfR^k$. Then we have $\mathrm{Ric}_f = \frac12 g$ and 
$\lambda_1(\Delta_f) = \frac12$, and the corresponding eigenfunctions are linear functions
in $\bfR^k$. 
\end{example}

\begin{theorem}[\cite{ChengZhou13}]\label{ChengZhou}
Let $(M^n, g, e^{-f}dV_g)$ be a complete smooth metric measure space with $\mathrm{Ric}_f \ge \lambda g$ for a positive constant $\lambda$. If $\lambda_1(\Delta_f) = \lambda$ with multiplicity $k$ then 
$M$ is isometric to $\Sigma^{n-k} \times \bfR^k$ with $\lambda_1(\Delta_f^{\Sigma}) > \lambda$
and $f$ is written in the form $f(p,t) = f(p,0) + \frac{\lambda}2 |t|^2$ where $p \in \Sigma$ and $t \in \bfR^k$. 
\end{theorem}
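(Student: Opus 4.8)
The plan is to extract the $\bfR^k$ factor from the equality case of the Bakry--\'Emery spectral gap in Proposition \ref{eigen}, following the classical rigidity argument for the Obata/Cheng type equality. First I would integrate the Weitzenb\"ock formula (\ref{Weitzenbock}) against the weighted measure $d\mu = e^{-f}dV_g$. Since $u \in L^2_f(M)$ is an eigenfunction with $\Delta_f u = -\lambda u$, the divergence-type term $\int_M \Delta_f |\nabla u|^2 \, d\mu$ vanishes (after justifying the integration by parts, which needs the completeness of $(M,g)$ together with the finiteness of the measure from Morgan's theorem and suitable $L^2$-bounds on $u$ and its derivatives via elliptic estimates). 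What remains is
\begin{equation*}
\lambda \int_M |\nabla u|^2 \, d\mu = \int_M |\nabla^2 u|^2 \, d\mu + \int_M \mathrm{Ric}_f(\nabla u, \nabla u)\, d\mu.
\end{equation*}
On the other hand, integrating $\Delta_f u = -\lambda u$ against $u$ gives $\int_M |\nabla u|^2\, d\mu = \lambda \int_M u^2\, d\mu$, and by Cauchy--Schwarz $\int_M |\nabla^2 u|^2 \ge \frac1n(\int_M \Delta u)^2/\mathrm{vol}$-type estimates are \emph{not} quite what I want; instead the hypothesis $\mathrm{Ric}_f \ge \lambda g$ forces $\int_M \mathrm{Ric}_f(\nabla u,\nabla u) \ge \lambda \int_M |\nabla u|^2$, so comparing with the displayed identity yields $\int_M |\nabla^2 u|^2 \, d\mu = 0$ together with $\mathrm{Ric}_f(\nabla u,\nabla u) = \lambda |\nabla u|^2$ pointwise. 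Hence $\nabla^2 u \equiv 0$: every equality eigenfunction is a function with parallel gradient.

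Next I would exploit $\nabla^2 u = 0$. Such a $u$ has $|\nabla u|$ constant (call it $c$; if $c=0$ then $u$ is constant, contradicting nonconstancy), so $V := \nabla u / c$ is a globally defined parallel unit vector field. By the de Rham splitting theorem applied to the universal cover, and then descending (using completeness), the line generated by $V$ splits off an $\bfR$ factor isometrically; since $V = \nabla(u/c)$ is a gradient, $M$ itself splits as $M \cong N \times \bfR$ with $u/c$ agreeing with the $\bfR$-coordinate up to a constant. Doing this for a basis of the $k$-dimensional eigenspace: the eigenfunctions $u_1, \dots, u_k$ give parallel gradients, which after Gram--Schmidt (the pairings $\int \langle \nabla u_i, \nabla u_j\rangle$ being constants times the identity after normalization) produce $k$ mutually orthogonal parallel vector fields, splitting $M \cong \Sigma^{n-k} \times \bfR^k$ with the $u_i$ becoming the Euclidean coordinates $t^i$ on $\bfR^k$.

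Finally I would pin down $f$ and the sharpness on the $\Sigma$ factor. On the $\bfR^k$ directions, $\mathrm{Ric} = 0$, so the condition $\mathrm{Ric}_f(\partial_{t^i}, \partial_{t^j}) = \lambda \delta_{ij}$ becomes $\nabla^2 f (\partial_{t^i},\partial_{t^j}) = \lambda \delta_{ij}$; combined with $\mathrm{Ric}_f \ge \lambda g$ in the mixed and $\Sigma$-directions and the product structure, one shows $\partial^2 f/\partial t^i \partial t^j = \lambda\delta_{ij}$ and $\partial^2 f/\partial p^a \partial t^i = 0$, which integrates to $f(p,t) = f(p,0) + \frac{\lambda}{2}|t|^2$ (the linear-in-$t$ term can be absorbed by translating coordinates, or killed by noting $u_i = t^i$ must satisfy $\Delta_f u_i + \lambda u_i = 0$, i.e.\ $-\nabla f \cdot \partial_{t^i} = -\lambda t^i$). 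Then $(\Sigma, g|_\Sigma, e^{-f(\cdot,0)}dV)$ inherits $\mathrm{Ric}_{f|_\Sigma} \ge \lambda g$, and if it had an eigenvalue equal to $\lambda$ we could run the argument again to split off yet another $\bfR$, contradicting that the multiplicity is exactly $k$; hence $\lambda_1(\Delta_f^\Sigma) > \lambda$. The main obstacle I expect is the analytic justification in the noncompact setting: showing the integration by parts in Weitzenb\"ock is legitimate and that $\nabla u, \nabla^2 u$ lie in $L^2(d\mu)$, which requires care with cutoff functions and the logarithmic Sobolev/spectral-gap estimates rather than the formal Bochner computation, which is routine.
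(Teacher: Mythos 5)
The paper states this theorem only as a citation of Cheng--Zhou and gives no proof of its own, and your proposal correctly reconstructs the argument of that source: integrate the weighted Bochner formula (\ref{Weitzenbock}) to force $\nabla^2 u\equiv 0$ for every $\lambda$-eigenfunction, split off $\bfR^k$ from the resulting parallel gradient fields, obtain $f(p,t)=f(p,0)+\tfrac{\lambda}{2}|t|^2$ from the eigenvalue equation for the coordinates $t^i$, and exclude $\lambda_1(\Delta_f^{\Sigma})=\lambda$ --- the genuinely technical point being, as you acknowledge, the cutoff justification of the integration by parts (that $|\nabla u|,|\nabla^2 u|\in L^2(d\mu)$) on a complete noncompact space. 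For the last step, the cleanest version of your contradiction is to note that a $\lambda$-eigenfunction on $\Sigma$ lifts, by the product (Gaussian-in-$t$) form of the weighted measure, to a $t$-independent $\lambda$-eigenfunction of $\Delta_f$ on $M$, which together with $t^1,\dots,t^k$ would force the multiplicity to exceed $k$.
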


In the case of shrinking Ricci soliton
\begin{equation}\label{shrinking}
\mathrm{Ric}_f = \lambda g, 
\end{equation}
it is shown in \cite{ChengZhou13} that we have $\lambda \le \lambda_1(\Delta_f) \le 2\lambda$.
This is because we always have 
\begin{equation}\label{2lambda}
\Delta_f f + 2\lambda f = 0,
\end{equation}
and $f$ belongs to $H^1_f(M)$. In the case of a compact shrinking soliton, a lower bound of
$\lambda_1(\Delta_f)$ can be given in terms of the diameter, and combined with
(\ref{2lambda}) we can obtain a universal lower bound of the diameter (\cite{FS13}, \cite{FLL}).

\section{Complete K\"ahler metric measure spaces}
We say that $(M,g,e^F dV_g)$ is a complete K\"ahler metric measure space if $(M,g)$ is a
complete K\"ahler manifold with 
\begin{equation}\label{mmK}
\operatorname{Ric}(\omega) - \omega = \sqrt{-1}\ \partial\barpartial F,
\end{equation}
where 
\begin{equation}\label{omega-c}
\omega = \sqrt{-1}\ g_{i\barj} dz^i \wedge d\overline{z^j}
\end{equation}
is the K\"ahler form and $F$ is a smooth function. If $M$ is compact $M$ is naturally a Fano
manifold. Note also $(M,g)$ is a gradient K\"ahler-Ricci soliton if in addition
$$ \nabla'' \nabla'' F = 0.$$
By the same arguments of Morgan \cite{Morgan05} we can show that the weighted volume
$\int_M e^F dV_g$ is finite. We consider the same weighted Laplacian $\Delta_F$ as in the 
Fano case in section 2. Namely, $\Delta_F$ is given by the same formula (\ref{Delta_F}) and 
acts on the complex-valued functions $C^\infty_\bfC(M)$. 

\begin{theorem}\label{cK} 
Let $(M,g,e^F dV_g)$ be a complete K\"ahler metric measure space. Then $\lambda_1(\Delta_F) \ge
1$ and there is an imbedding of the $1$-eigenspace $\Lambda_1$ to the Lie algebra $\mathfrak h(M)$ of
all holomorphic vector fields on $M$.
\end{theorem}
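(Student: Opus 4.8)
The plan is to adapt the compact Fano argument of Theorem \ref{F} to the noncompact complete setting, replacing ``integration by parts on a closed manifold'' with ``integration by parts against the finite weighted measure $e^F dV_g$.'' First I would recall the Weitzenb\"ock-type identity behind \eqref{1-2}: for a complex-valued $u \in C^\infty_\bfC(M)$ one has the pointwise formula
\begin{equation*}
g(\barpartial \Delta_F u, \barpartial u) = |\nabla''\nabla'' u|^2 + |\barpartial u|^2 + \text{(total divergence terms w.r.t. } e^F dV_g).
\end{equation*}
On a compact manifold the divergence terms integrate to zero; in the noncompact case I would instead fix a test function $u$ in the form domain of $\Delta_F$ (a first eigenfunction, so in particular $u$, $\barpartial u$ and $\nabla''\nabla'' u$ are all in $L^2(e^F dV_g)$ by elliptic regularity and the finiteness of the weighted volume), and justify that the boundary/divergence contribution vanishes upon integration. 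Here one uses that $\int_M e^F dV_g < \infty$ together with a cutoff-function argument: multiply by $\chi_R$, a Lipschitz cutoff equal to $1$ on $B_R$ and supported in $B_{2R}$ with $|\nabla\chi_R| \le C/R$, integrate, and let $R \to \infty$; the cross terms are controlled by Cauchy--Schwarz since the relevant quantities are in $L^2(d\mu)$. This yields the integrated identity
\begin{equation*}
- \int_M g(\barpartial \Delta_F u, \barpartial u)\, e^F dV_g = \int_M \left( |\nabla''\nabla'' u|^2 + |\barpartial u|^2 \right) e^F dV_g.
\end{equation*}
Feeding in $\Delta_F u = -\lambda_1 u$ (so that $\barpartial \Delta_F u = -\lambda_1 \barpartial u$) gives $(\lambda_1 - 1)\int_M |\barpartial u|^2 d\mu = \int_M |\nabla''\nabla'' u|^2 d\mu \ge 0$, and since $u$ is nonconstant $\int_M |\barpartial u|^2 d\mu > 0$, forcing $\lambda_1 \ge 1$.

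For the embedding $\Lambda_1 \hookrightarrow \mathfrak h(M)$: if $\lambda_1(\Delta_F) = 1$ and $u$ is a $1$-eigenfunction then the identity above forces $\nabla''\nabla'' u \equiv 0$, i.e. $\nabla_\bark\nabla_\barl u = 0$ for all $k,\ell$. This is precisely the condition that $X := \mathrm{grad}^\prime u = g^{i\barj}(\partial u/\partial\overline{z^j})(\partial/\partial z^i)$ is a holomorphic vector field: $\barpartial X = 0$ is equivalent to $\nabla_{\barj}(g^{i\bark}\nabla_{\bark} u) = g^{i\bark}\nabla_{\barj}\nabla_{\bark} u = 0$, which is exactly $\nabla''\nabla'' u = 0$ after raising/lowering indices (using that $g$ is K\"ahler so covariant derivatives commute with the metric). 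The assignment $u \mapsto \mathrm{grad}^\prime u$ is clearly linear, and its kernel consists of $u$ with $\barpartial u = 0$, i.e. the constants — but constants lie in the $0$-eigenspace, not $\Lambda_1$ — so the map restricted to $\Lambda_1$ is injective. This gives the desired embedding.

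The main obstacle I anticipate is the analytic justification of the integration by parts in the noncompact setting, i.e. proving that the cutoff error terms vanish in the limit. The subtle point is that a priori one only knows $u \in L^2(d\mu)$ and $\lambda_1 u = -\Delta_F u \in L^2(d\mu)$; one must bootstrap to get $\barpartial u$ and $\nabla''\nabla'' u$ in $L^2(d\mu)$ before the cutoff estimates even make sense. For $\barpartial u$ this follows from \eqref{self} applied with $v = u$ (valid for $u$ in the form domain, which is the closure of $C^\infty_c$, and first eigenfunctions lie there by the variational characterization, exactly as in Theorem \ref{ChengZhou}'s setup). For $\nabla''\nabla'' u$ one uses the integrated Weitzenb\"ock identity itself together with local elliptic estimates; alternatively, one argues that since $\int_M |\barpartial u|^2 d\mu < \infty$ and the weighted volume is finite, the standard Bochner-technique cutoff argument (as in \cite{ChengZhou13} for the real case) goes through verbatim after replacing $\nabla$ by $\nabla''$ and $d\mu = e^{-f}dV_g$ by $d\mu = e^F dV_g$. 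I would present this last point by reference to the parallel Riemannian argument rather than reproving it in detail.
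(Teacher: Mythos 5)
Your route is essentially the paper's own: the bound and the embedding are obtained exactly as in the compact Fano case (Theorem \ref{F}), by integrating the Weitzenb\"ock-type identity behind (\ref{1-2}) against the weighted measure $e^F dV_g$, whose finiteness (by Morgan's argument, as the paper notes) is what lets the integration by parts survive the passage to a complete noncompact manifold. The deduction $(\lambda-1)\int_M|\barpartial u|^2\,d\mu=\int_M|\nabla''\nabla''u|^2\,d\mu\ge 0$, and the equivalence $\nabla''\nabla''u=0\iff \mathrm{grad}'u\in\mathfrak h(M)$, are precisely the intended argument; the paper itself gives no more analytic detail than you do, so your cutoff discussion is a sensible (if schematic) filling-in rather than a deviation.

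One step as you wrote it is false, though easily repaired, and the repair matters precisely in the noncompact setting this theorem addresses: the kernel of $u\mapsto\mathrm{grad}'u$ is not the constants but the holomorphic functions ($\barpartial u=0$), and on a complete noncompact $M$ there are in general many nonconstant holomorphic functions in $L^2(e^F dV_g)$ --- for instance the polynomials in $z_2,\dots,z_n$ on the Gaussian $\bfC^n$ example discussed right after Theorem \ref{cK}. Injectivity on $\Lambda_1$ should instead come from the eigenvalue equation: if $u\in\Lambda_1$ and $\barpartial u=0$, then both terms of (\ref{Delta_F}) vanish identically, so $0=\Delta_F u=-u$; equivalently, $(\barpartial u,\barpartial u)_F=(u,u)_F$ from (\ref{self}) forces $u=0$. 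With that correction the embedding statement is proved as in the paper.
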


Unlike the real case in section 3, $\Lambda_1$ can be infinite dimensional so that $1$ is an
essential spectrum. For example, consider $(\bfC^n, g_{\mathrm{can}}, e^{-|z|^2}dV_{g_{\mathrm{can}}})$
with $n \ge 2$ 
where $g_{\mathrm{can}} = \nabla z\cdot \nabla\overline{z}$. With $F = -|z|^2$,
$(\bfC^n, g_{\mathrm{can}}, e^{-|z|^2}dV_{g_{\mathrm{can}}})$ is a complete K\"ahler metric measure
space, or even gradient shrinking K\"ahler-Ricci soliton. Let $v(z_2, \cdots, v_n)$ be polynomials
in $z_2, \cdots, z_n$, and put $ u = v\overline{z_1}$. Then we see
$$ \Delta_F u = -u.$$
The space of all polynomials is of course infinite dimensional. Note that the imbedding in Theorem \ref{cK}
is not surjective since there are non-integrable holomorphic functions in $z_2, \cdots, z_n$. 

Note also a result of the same type as Theorem \ref{ChengZhou} also does not hold in K\"ahler situation as the following example shows. 
Let $\Sigma$ be an $n$-dimensional Fano manifold with $\dim \mathfrak h(\Sigma) = k$. Then there is no splitting of Euclidean factor. 
Moreover, as seen above, the first eigenspace of the Euclidean factor $\bfC^k$ is infinite dimensional.

Note that in the case of K\"ahler-Ricci solitons the logarithmic Sobolev inequality holds for real-valued
functions (with respect to weighted $d$-Laplacian). Similar arguments were given in Gross \cite{Gross99}
and Gross and Qian \cite{GrossQian04}.

\bibliographystyle{amsalpha}

\begin{thebibliography}{A}


\bibitem{BE} D.~Bakry and M.~\'Emery : Diffusion hypercontractives, S\'em. Prob. XIX, 
Lecture Notes in Math. 1123 (1985), 177--206. Springer-Verlag, Berlin.

\bibitem{ChengZhou13}X.~Cheng and D.~Zhou : 
Eigenvalues of the drifted Laplacian on complete metric measure spaces, preprint, arXiv:1305.4116, 2013.

\bibitem{futaki83.1}A.~Futaki : 
An obstruction to the existence of Einstein K\"ahler metrics. Invent. Math. 73 (1983), no. 3, 437--443. 

\bibitem{futaki87}A.~Futaki : 
The Ricci curvature of symplectic quotients of Fano manifolds. Tohoku Math. J. (2) 39 (1987), no. 3, 329--339.

\bibitem{futaki88}A.~Futaki : 
K\"ahler-Einstein metrics and integral invariants. Lecture Notes in Mathematics, 1314. Springer-Verlag, Berlin, 1988.

\bibitem{FLL}
A.~Futaki, H.-Z.~Li and X.-D.~Li : On the first eigenvalue of the Witten-Laplacian and the diameter of compact shrinking solitons. 
Ann. Global Anal. Geom. 44 (2013), no. 2, 105--114.

\bibitem{FS13} A. Futaki, Y. Sano, 
Lower diameter bounds for compact shrinking Ricci solitons. Asian J. Math. 17 (2013), no. 1, 17--31.

\bibitem{Gross75}
L.~Gross : Logarithmic Sobolev inequalities. Amer. J. Math. 97 (1975), no. 4, 1061--1083.

\bibitem{Gross99}
L.~Gross : Hypercontractivity over complex manifolds. Acta Math. 182 (1999), no. 2, 159--206. 

\bibitem{GrossQian04}
L.~Gross and Z.~Qian : Holomorphic Dirichlet forms on complex manifolds. Math. Z. 246 (2004), no. 3, 521--561.

\bibitem{Guionnet-Zegarlinski}
A.~Guionnet and B.~Zegarlinski : Lectures on logarithmic Sobolev inequalities. S\'eminaire de Probabilit\'es, XXXVI, 1--134, 
Lecture Notes in Math., 1801, Springer, Berlin, 2003.

\bibitem{HeinNaber13}
H-J.~Hein and A.~Naber : New logarithmic Sobolev inequalities and an $\epsilon$-regularity theorem for the Ricci flow,
Comm. Pure Appl. Math., published online: 26 JUL 2013.

\bibitem{KJ}J.~L.~Kazdan : Gaussian and scalar curvature, an update, Seminar on
differential geometry (S.~T.~Yau, ed.) , Princeton Univ. Press, New
Jersey, 1982, 185-191.

\bibitem{KJ-FW1} J.~L.~Kazdan and F.~W.~Warner : Curvature functions for compact $2$-manifolds. 
Ann. of Math. (2) 99 (1974), 14--47.

\bibitem{kobayashitrnsf}S.~ Kobayashi, Transformation groups in differential geometry, 
Springer Verlag, Berlin-Heidelberg-New York, 1972.

\bibitem{LiLong13}
L.~Li : On the Spectrum of weighted Laplacian operator and its application to uniqueness of K\" ahler Einstein metrics, 
preprint, arXiv:1311.0038, 2013.

\bibitem{Mabuchi87-1}
T.~Mabuchi : Einstein-K\"ahler forms, Futaki invariants and convex geometry on toric Fano varieties. Osaka J. Math. 24 (1987), no. 4, 705--737. 

\bibitem{Mabuchi90-2}
T.~Mabuchi :  An algebraic character associated with the Poisson brackets. Recent topics in differential and analytic geometry, 339--358, 
Adv. Stud. Pure Math., 18-I, Academic Press, Boston, MA, 1990. 

\bibitem{matsushima57}Y.~Matsushima : Sur la structure du groupe 
d'hom\'eomorphismes analytiques d'une certaine vari\'et\'e kaehl\'erienne, Nagoya
Math. J., {\bf 11}, 145-150 (1957).

\bibitem{Morgan05}
F.~Morgan : Manifolds with density. Notices Amer. Math. Soc. 52 (2005), no. 8, 853--858. 

\bibitem{Pali08}
N.~Pali : Characterization of Einstein-Fano manifolds via the K\"ahler-Ricci flow. Indiana Univ. Math. J. 57 (2008), no. 7, 3241--3274. 

\bibitem{Pel} G. Perelman, The entropy formula for the Ricci flow and its geometric applications,
http://arXiv.org/abs/maths0211159.

\bibitem{WZ13} F.~Wang and X.-H.~ Zhu : On the structure of spaces with Bakry-\'Emery Ricci curvature bounded below, preprint. arXiv:1304.4490.




\end{thebibliography}

\end{document}